\newtheorem{theorem}{Theorem}[section]
\newtheorem{proposition}[theorem]{Proposition}
\theoremstyle{definition}
\theoremstyle{remark}
\newtheorem{remark}[theorem]{Remark}
\numberwithin{equation}{section}
\font\tenmsb=msbm10  \textfont\msbfam=\tenmsb
\font\sevenmsb=msbm7  \scriptfont\msbfam=\sevenmsb
\font\fivemsb=msbm5    \scriptscriptfont\msbfam=\fivemsb
\def\Bbb{\fam\msbfam \tenmsb}
\def\CC{{\Bbb C}}
\font\tenmsbb=msbm10  scaled \magstep1 \textfont\msbbfam=\tenmsbb
\font\sevenmsbb=msbm7  scaled \magstep1 \scriptfont\msbbfam=\sevenmsbb
\font\fivemsbb=msbm5    scaled \magstep1 \scriptscriptfont\msbbfam=\fivemsbb
\begin{document}

\title{Lindel\"{o}f principle for domains in $\CC^2$ of finite type}

\author{Baili Min}
\address{Department of Mathematics, Washington University in St.\,Louis, Saint Louis, MO 63130}
\email{minbaili@math.wustl.edu}


\subjclass{32A40}

\date{\today}

\keywords{Several complex variables, finite type, Lindel\"{o}f principle}

\begin{abstract} 
Recall the Lindel\"{o}f principle for the unit disc in $\CC$.
In this paper we will show some results about the Lindel\"{o}f principle with admissible convergence for domains in $\CC^2$ of finite type.
\end{abstract}

\maketitle

\section{Background}
Recall the classical Lindel\"{o}f principle:
\begin{theorem}
Let $f$ be a bounded holomorphic function on the unit disc $D \subseteq \mathbb{C}$. Suppose that the radial limit
\begin{displaymath}
\lim_{r \to 1-}f(r e^{i\theta}) \equiv \lambda \in \mathbb{C}
\end{displaymath}
of $f$ exists at the boundary point $e^{i\theta}$. Then $f$ has nontangential limit $\lambda$ at $e^{i \theta}$.
\end{theorem}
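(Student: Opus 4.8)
The plan is to reduce the statement to a normal-families (Montel) argument by zooming in at the boundary point with disc automorphisms, and then to convert the resulting convergence back into a nontangential statement using the fact that a nontangential approach region is a bounded hyperbolic neighborhood of the radius.

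First I would normalize. After a rotation we may assume the boundary point is $1$, and by replacing $f$ with $f - \lambda$ (still bounded and holomorphic) we may assume $\lambda = 0$; after scaling assume $\abs{f} \le 1$. Thus $\lim_{r \to 1-} f(r) = 0$. Next, introduce the automorphisms $\phi_t(z) = \frac{t+z}{1+tz}$ for $t \in (0,1)$, which fix $\pm 1$, preserve the real axis (mapping $(-1,1)$ onto itself), and satisfy $\phi_t(0) = t$. Set $F_t = f \circ \phi_t$; these are holomorphic with $\abs{F_t} \le 1$. The crucial observation is that for each fixed $x \in (-1,1)$ we have $\phi_t(x) \to 1$ radially as $t \to 1$, so $F_t(x) = f(\phi_t(x)) \to 0$.

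By Montel's theorem the family $\{F_t\}$ is normal. Any subsequential locally uniform limit $F$ is holomorphic, bounded by $1$, and vanishes on the whole diameter $(-1,1)$, hence $F \equiv 0$ by the identity theorem. Since every subsequence thus has a sub-subsequence converging to $0$, the full family converges: $F_t \to 0$ locally uniformly as $t \to 1-$. I would then transfer this back to $f$. Given a nontangential (Stolz) approach region $S_\alpha$ at $1$, I claim there is a compact set $K \subset D$ so that every $z \in S_\alpha$ can be written as $z = \phi_t(w)$ with $w \in K$ and $t = t(z) \to 1$ as $z \to 1$. Taking $t$ to be the point of $(-1,1)$ closest to $z$ in the hyperbolic metric, invariance of the hyperbolic distance under $\phi_t$ gives $d(w,0) = d(z,t) = d(z,(-1,1))$, and the defining inequality of the Stolz region bounds this last quantity uniformly over $S_\alpha$, while letting $z \to 1$ forces $t \to 1$. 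Then $f(z) = F_t(w) \to 0$ uniformly for $w \in K$, which is exactly the desired nontangential limit.

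I expect the main obstacle to be this final geometric step: verifying that a nontangential approach region sits inside a bounded hyperbolic neighborhood of the radius, i.e. the uniform bound on $d(z,(-1,1))$ over $S_\alpha$ together with $t(z) \to 1$. The normality argument and the identity-theorem conclusion are routine once one notices that the radial hypothesis pins down every limit function on the entire real diameter rather than merely at the origin.
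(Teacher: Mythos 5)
Your proof is correct, but note that the paper never actually proves this statement: it is Theorem 1.1, recalled as the \emph{classical} Lindel\"of principle purely for background and motivation, so there is no proof of record to compare yours against. Your argument is the standard normal-families proof, and each step is sound: the normalization; the fact that $\phi_t(x)\to 1$ along the radius for each fixed $x\in(-1,1)$, so that every Montel limit of $F_t=f\circ\phi_t$ vanishes on the whole diameter and hence, by the identity theorem, identically; the sub-subsequence trick upgrading this to locally uniform convergence $F_t\to 0$ as $t\to 1^-$; and the final geometric step. You rightly flag that last step as the crux, and your reduction of it is the right one: taking $t$ to be the hyperbolically nearest point of $(-1,1)$ to $z$, M\"obius invariance gives $d(w,0)=d(z,t)=d\bigl(z,(-1,1)\bigr)$, and a short computation with the pseudo-hyperbolic distance between $z$ and $|z|$ (using $|z-1|<\alpha(1-|z|)$) bounds this by a constant $C(\alpha)$ depending only on the aperture, while $t(z)\to 1$ follows because a hyperbolic ball of radius $C(\alpha)$ centered at $z$ has Euclidean diameter comparable to $1-|z|\to 0$. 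It is worth observing that your proof is exactly the one-variable prototype of the method this paper (following Krantz) uses for its finite-type results in Section 2: your automorphisms $\phi_t$ play the role of the dilations $\varphi_j$, your family $F_t$ the role of $g_j=f\circ\varphi_j^{-1}$, vanishing on the diameter $(-1,1)$ the role of vanishing on the totally real set $T$, and the containment of the Stolz region in a bounded hyperbolic neighborhood of the radius the role of the containment $\mathscr{A}\subseteq\bigcup_{j}\varphi_j^{-1}(\Omega_0)$ furnished by Proposition \ref{regionshape}. So your route is not only valid but also illuminates the structure of the arguments the paper does carry out.
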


For several complex variables, we would like to find an analogous theorem. But what kind of convergence we should have for domains in $\CC^n, n\geqslant 2$? Strongly pseudoconvex domains have been well studied, can we generalize results for the case of finite type?

Here we must mention the notion of finite type. The type of a boundary point describes the order of contact at this point. If a domain is strictly pseudoconvex then it is also of finite type, but the converse is false. For the history of the notion of finite type and the modern techniques, please refer to \cite{BG}, \cite{C}, \cite{D1}, \cite{D2} and \cite{Kohn}.

In the case of a single complex variable, the appropriate approach region is the non-tangential one, while in the case of several complex variables, we are more interested in the \emph{admissible} approach regions, because for these regions, in both strongly pseudoconvex and finite type cases, the Fatou theorem works( see \cite{K}, \cite{Stein}, \cite{Lempert}, \cite{NSW} and \cite{Neff}), and they are sharp( see \cite{HS} and \cite{Min}). So we wonder: is there a Lindel\"{o}f principle for domains in $\CC^n, n \geqslant 2$ with  admisible convergence?  However, it turns out to be false as we can construct a counter example( see \cite{CK}). 

But it is true for another kind of convergence, that is, \emph{hypoadmissible} convergence:
\begin{theorem}
Let $B \subseteq \mathbb{C}^n$ be the unit ball. Let $f$ be a bounded holomorphic function and fix $P \in \partial B$. If the limit
\begin{displaymath}
\lim_{r \to 1-} f(rP) \equiv \lambda \in \mathbb{C}
\end{displaymath}
exists, then for any sequence $\{z^{(j)}\}_{j=1}^{\infty} \subseteq B$ that approaches $P$ hypoadmissibly, we have
\begin{displaymath}
\lim_{j \to \infty} f(z^{(j)}) = \lambda.
\end{displaymath}
\end{theorem}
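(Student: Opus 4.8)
The plan is to reduce everything to the classical one--variable Lindel\"of principle (Theorem 1.1) by slicing the ball along the normal direction at $P$. First I would normalize: by the unitary invariance of $B$ there is no loss in taking $P=(1,0,\dots,0)$, and by replacing $f$ with $f-\lambda$ I may assume $\lambda=0$. Write points as $z=(z_1,z')$ with $z'\in\mathbb{C}^{n-1}$, and set $M=\sup_B|f|$. Introduce the normal slice $g(\zeta)=f(\zeta,0')$, which is a bounded holomorphic function on the unit disc whose radial limit is $g(r)=f(rP)\to 0$. Classical Lindel\"of then gives that $g$ has nontangential limit $0$ at $\zeta=1$.

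Next I would separate the approach of $z^{(j)}=(z_1^{(j)},z'^{(j)})$ into its normal part $z_1^{(j)}$ and its tangential part $z'^{(j)}$. The containment of the hypoadmissible regions inside the Kor\'anyi admissible regions forces $|1-z_1^{(j)}|\le C(1-|z_1^{(j)}|)$, i.e.\ $z_1^{(j)}\to 1$ nontangentially in the disc; hence $g(z_1^{(j)})\to 0$ by the previous step. It then remains to control the tangential discrepancy $f(z^{(j)})-g(z_1^{(j)})=f(z_1^{(j)},z'^{(j)})-f(z_1^{(j)},0')$.

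The key estimate is a Schwarz lemma on the tangential fibre. For fixed $w$ with $|w|<1$, the map $z'\mapsto f(w,z')$ is holomorphic and bounded by $M$ on the ball $\{|z'|<\rho\}$ with $\rho=\sqrt{1-|w|^2}$; applying the Schwarz lemma to the function $(f(w,\cdot)-f(w,0'))/(2M)$, which is bounded by $1$ and vanishes at $0'$, yields
\[
|f(w,z')-f(w,0')|\le 2M\,\frac{|z'|}{\sqrt{1-|w|^2}}.
\]
Setting $w=z_1^{(j)}$ and $z'=z'^{(j)}$ gives $|f(z^{(j)})-g(z_1^{(j)})|\le 2M\,|z'^{(j)}|/\sqrt{1-|z_1^{(j)}|^2}$.

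The whole argument now hinges on the definition of hypoadmissibility: the point of that notion is precisely that the tangential part decays strictly faster than the parabolic rate, $|z'^{(j)}|^2=o(1-|z_1^{(j)}|^2)$, so that $|z'^{(j)}|/\sqrt{1-|z_1^{(j)}|^2}\to 0$. Combining this with $g(z_1^{(j)})\to 0$ gives $f(z^{(j)})\to 0=\lambda$, as desired. I expect the main obstacle to be exactly this calibration: for merely admissible (Kor\'anyi) approach the tangential ratio stays bounded but not small, the Schwarz bound no longer vanishes, and indeed the conclusion fails---this is the Cima--Krantz counterexample alluded to above. So the delicate step is to verify that the hypoadmissible condition is strong enough to kill the tangential Schwarz term while the normal part remains genuinely nontangential; once these two facts are isolated, the reduction to Theorem 1.1 is routine.
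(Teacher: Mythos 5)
This statement is Theorem 1.2 of the paper, which is quoted as background from the literature (Chirka, Cima--Krantz, and Krantz's book); the paper itself supplies no proof of it, so there is no internal argument to compare yours against. Judged on its own terms, your proof is correct, and it is essentially the classical argument found in those references: normalize $P=(1,0,\dots,0)$ and $\lambda=0$; apply the one-variable Lindel\"{o}f principle to the normal slice $g(\zeta)=f(\zeta,0')$; control the tangential discrepancy via the Schwarz lemma on the fibre $\{|z'|<\sqrt{1-|w|^2}\}$, which gives $|f(w,z')-f(w,0')|\le 2M|z'|/\sqrt{1-|w|^2}$; then let the two defining properties of hypoadmissible approach finish the job. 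The only point at which your write-up is not self-contained is exactly the one you flag: you assume, rather than verify, that hypoadmissibility means (i) the normal component $z_1^{(j)}$ stays in a Stolz region of the disc and (ii) the tangential component satisfies $|z'^{(j)}|^2=o\bigl(1-|z_1^{(j)}|^2\bigr)$. These are indeed the content of the Chirka/Cima--Krantz definition (Kor\'anyi-type approach in the normal direction together with little-$o$ rather than big-$O$ parabolic decay tangentially), so the argument closes. Your final remark is also the right diagnosis of sharpness: under the big-$O$ (Kor\'anyi) condition alone, the Schwarz term stays bounded but need not tend to zero, and this is precisely what the Cima--Krantz counterexample exploits to show that the fully admissible Lindel\"{o}f principle fails.
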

More details, including the definition of hypoadmissible convergence, can be found in \cite{Chirka}, \cite{CK} and \cite{SK1}

We still wonder how we can have the \emph{admissible} limit, because it has been shown that admissible approach regions are optimal to some extent, for domains strongly pseudoconvex, and even further of finite type, and it is strictly stronger than hypoadmissible convergence.

In this paper I would like to provide two main results with admissible convergence, dealing with domains in $\CC^2$ of finite type. They are Theorem \eqref{main1} and Theorem \eqref{main2}, both based on the study of the shape of the admissible approach regions and the work on strongly pseudoconvex domains.

Throughout the text, let $\Omega$ be a domain of finite type in $\mathbb{C}^2$, characterized by a defining function $\rho$, with $m \geqslant 2$ being the maximal type. Suppose that $(1,0)$ is on the boundary, and its normal direction is $\langle1, 0\rangle$. We also suppose that $\frac{\partial \rho}{\partial z_1}$ and $\frac{\partial \rho}{\partial \overline{z}_1}$ do not vanish at $(1,0)$.

\section{$T$-approach}
There are some interesting results by Krantz in \cite{SK2}, which give admissible convergence. But in that paper, the work is done for strongly pseudoconvex domains. Here we can generalize them for domains of finte type in $\CC^2$.
\subsection{Shape of the admissible approach region}
Suppose $\omega^0=(\omega_1^0, \omega_2^0) \in \partial \Omega$. Then, in a small neighborhood $V=V_{\omega^0}$ of $\omega^0$, the complex holomorphic tangential vector field has a basis $L+i\overline{L}$, where $L=-\frac{\partial \rho}{\partial z_2}\frac{\partial}{\partial z_1}+\frac{\partial \rho}{\partial z_1}\frac{\partial}{\partial z_2}$ and $\overline{L}=-\frac{\partial \rho}{\partial \overline{z}_2}\frac{\partial}{\partial \overline{z}_1}+\frac{\partial \rho}{\partial \overline{z}_1}\frac{\partial}{\partial \overline{z}_2}$.

Then we can find a transverse vector field $T$ such that $L, \overline{L}$ and $T$ span the tangent space to $\partial \Omega$ at any point in $V$. Therefore, if $\mathscr{L}_{k-1}$ is an iterated commutator of degree $k-1$, we have $\mathscr{L}_{k-1} \equiv \lambda_{k-1}T \ \text{mod}(L, \overline{L})$. Define $\mathscr{L}_k=[L,\mathscr{L}_{k-1}]$. Let $\mathscr{M}_k$ be the collection of all these linearly independent iterated commutators with degree less or equal to $k$. Suppose $\mathscr{L} \in \mathscr{M}_k$, and that $\lambda_{\mathscr{L}}$ is the coefficient function of $T$ in the sense that $\mathscr{L} \equiv \lambda_{\mathscr{L}}T \ \text{mod}(L, \overline{L})$. Then we can define $\Lambda_k(z)$ and $D(z)$by:
\begin{displaymath}
\Lambda_k(z)=\sqrt{\sum_{\mathscr{L} \in \mathscr{M}_k}{\lambda_{\mathscr{L}}^2(z)}}, \end{displaymath} 
\begin{displaymath}
D(z)=\inf_{2 \leqslant k \leqslant \tau}\Big(\frac{\delta(z)}{\Lambda_k\big(\pi(z)\big)}\Big)^{1/k},
\end{displaymath}where $\pi$ is the Euclidean normal projection and $\delta(z)=|z-\pi(z)|$ is the ordinary distance of $z$ to the boundary.

A polarization $R(z, w)$ of $\rho$ is a $C^\infty$ complex-valued function satisfying that $R(z,z)=\rho(z)$, $\overline{\partial}_zR(z,w)$ vanishes to infinite order on $z=w$ and $R(z,w)-\overline{R(w,z)}$ vanishes to infinite order on $z=w$.

The admissible approach region  at $(1,0)$ is then defined by
\begin{equation}
\left\{
\begin{array}{lr}
|\pi(z)-(1,0)|< \alpha D(z),&\\
\Big|R\big(\pi(z),(1,0)\big)\Big|<\Lambda^{\alpha D(z)}(1,0).& \\
\end{array} \right.
\end{equation}

We want to estimate $R\big(\pi(z),(1,0)\big)$. First let $\pi(z)=(w_1, w_2)$.

By definition, $R\big((w_1, w_2),(1,0)\big)$ is a holomorphic function. We can expand it into a formal series around the point $(1,0)$. Ignoring higher terms, we know that $\mathscr{A}_1(1,0)$ is comparable to the region defined by
\begin{equation}
\left\{
\begin{array}{lr}
|(w_1, w_2))-(1,0)|<  D(z),&\\
\big|c_1(w_1-1)^{k_1}+c_2(w_1-1)^{k_2}w_2^{k_3}+c_1 w_2^{k_4}\big|<\Lambda^{ D(z)}(1,0),& \\
\end{array} \right.
\end{equation}
where $c_i$'s are complex numbers and $k_i$'s are positive integers.

As we estimate $D(z)$ and $\Lambda^{\alpha D(z)}(1,0)$( see \cite{Min}), we know that this region sits inside, up to comparability, the one defined by
\begin{equation}
\left\{
\begin{array}{lr}
|(w_1, w_2))-(1,0)|< \delta(z)^{\frac{1}{m}},&\\
\big|c_1(w_1-1)^{k_1}+c_2(w_1-1)^{k_2}w_2^{k_3}+c_1 w_2^{k_4}\big|<\delta(z)^k,& \\
\end{array} \right.
\end{equation}
where $k>0$.

By solving the inequalities, we know that it is comparable to the region given by
\begin{equation}
\left\{
\begin{array}{lr}
|(w_1, w_2)-(1,0)|< \delta(z)^{\frac{1}{m}},&\\
\big|w_1-1|<\delta(z)^{k_5},& \\
\end{array} \right.
\end{equation}
which is also comparable to, by doing the same analysis as in \cite{Min},
\begin{equation}
\left\{
\begin{array}{lr}
|(z_1, z_2)-(1,0)|< \delta(z)^{\frac{1}{m}},&\\
\big|w_1-1|<\delta(z)^{k_5}.& \\
\end{array} \right.
\label{bigregion}
\end{equation}

We have
\begin{equation}
|z_1-1| \leqslant |z_1-w_1|+|w_1-1|
\end{equation}
and
\begin{equation}
|z_1-w_1| \sim \delta(z), |w_1-1|<\delta(z)^{k_5}.
\end{equation}

CASE ONE: $k_5 \geqslant 1$.

In this case we have $ |z_1-1| \lesssim \delta(z)$. So up to some comparability, the region \eqref{bigregion} is inside 
\begin{equation}
\left\{
\begin{array}{lr}
|(z_1, z_2)-(1,0)|< \delta(z)^{\frac{1}{m}},&\\
|z_1-1|<\delta(z),& \\
\end{array} \right.
\label{bigregionvar1}
\end{equation}

CASE TWO: $k_5 < 1$.

In this case we have $ |z_1-1| \lesssim \delta(z)^{k_5}$. Therefore we have
\begin{equation}
\left\{
\begin{array}{lr}
|(z_1, z_2)-(1,0)|< \big(\delta(z)^{k_5}\big)^{\frac{1}{k_5m}}<\big(\delta(z)^{k_5}\big)^{\frac{1}{m}},&\\
|z_1-1|<\delta(z)^{k_5}.& \\
\end{array} \right.
\label{bigregionvar2}
\end{equation}

Therefore we can conclude that:
\begin{proposition}
\label{regionshape}
There is a region $\mathscr{A}$ that is comparable with $\mathscr{A}_1(1,0)$ and is lying inside the region bounded by 
\begin{displaymath}
|(z_1, z_2)-(1,0)|^m=|z_1-1|.
\end{displaymath}
\end{proposition}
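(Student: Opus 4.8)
The plan is to reduce the proposition to the two normalized regions already isolated in the case analysis, and then to eliminate the distance function $\delta(z)$ in favour of the complex--normal displacement $|z_1-1|$. Concretely, I would take $\mathscr{A}$ to be the region described by \eqref{bigregionvar1} in CASE ONE and by \eqref{bigregionvar2} in CASE TWO. Since each of these was obtained from $\mathscr{A}_1(1,0)$ through the chain of comparabilities in the previous subsection, the resulting $\mathscr{A}$ is comparable with $\mathscr{A}_1(1,0)$, so it suffices to show that $\mathscr{A}$ lies inside the set $\{\,|z-(1,0)|^m\le|z_1-1|\,\}$ up to a fixed constant.

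In both cases the region has the uniform shape $|z-(1,0)|<\eta^{1/m}$ together with $|z_1-1|<\eta$, where $\eta=\delta(z)$ in CASE ONE and $\eta=\delta(z)^{k_5}$ in CASE TWO. Raising the first inequality to the $m$-th power gives $|z-(1,0)|^m<\eta$, so everything comes down to comparing the scale $\eta$ with $|z_1-1|$. I would establish $\eta\lesssim|z_1-1|$ as follows: because the outward normal at $(1,0)$ is $\langle1,0\rangle$ and $\partial\rho/\partial z_1$, $\partial\rho/\partial\overline z_1$ do not vanish there, the Euclidean distance $\delta(z)$ is comparable to the real normal displacement $\mathrm{Re}(1-z_1)$, while the second defining inequality of $\mathscr{A}_1(1,0)$ (the bound on $R(\pi(z),(1,0))$) confines the transverse component $\mathrm{Im}\,z_1$ to the same scale. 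Combining these gives $|z_1-1|\gtrsim\eta$ on $\mathscr{A}$, whence $|z-(1,0)|^m<\eta\lesssim|z_1-1|$, which is the desired containment. Geometrically this says that along $\mathscr{A}$ the complex--tangential spread $|z_2|$ is at most $|z_1-1|^{1/m}$, so that $\mathscr{A}$ is swept out by a family of sets whose outer boundary, where both defining inequalities are tight, traces exactly the surface $|z-(1,0)|^m=|z_1-1|$.

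The hard part will be the lower bound $\eta\lesssim|z_1-1|$ in CASE TWO, where $\eta=\delta(z)^{k_5}$ with $k_5<1$ is strictly larger than $\delta(z)$. Here the naive triangle inequality $|z_1-1|\ge|z_1-w_1|-|w_1-1|$, with $|z_1-w_1|\sim\delta(z)$ and $|w_1-1|<\delta(z)^{k_5}$ from \eqref{bigregion}, degenerates because the subtracted term dominates, so the bound cannot simply be read off. I would instead argue that cancellation between $z_1-w_1$ (essentially real, in the normal direction) and $w_1-1$ (the boundary displacement, whose real part is governed by the order-$m$ contact) cannot occur on the admissible region: the finite-type condition forces $\mathrm{Re}(w_1-1)\sim|w_2|^m$, and on $\mathscr{A}$ this is of the same order as the normal displacement, so the two contributions to $z_1-1$ add rather than cancel. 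Making this quantitative---tracking the constants through the same comparabilities used in \cite{Min}---is the only genuinely delicate point; once it is in place the containment in $\{\,|z-(1,0)|^m\le|z_1-1|\,\}$ follows in both cases and the proposition is proved.
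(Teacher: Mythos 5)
Your reduction is the same as the paper's: the paper's entire proof of Proposition~\ref{regionshape} consists of the chain of comparabilities ending in \eqref{bigregionvar1} and \eqref{bigregionvar2}, after which the containment is simply asserted. You go further and correctly observe that the two defining inequalities of each case region are both \emph{upper} bounds, so by themselves they cannot place the region on one side of the surface $|z-(1,0)|^m=|z_1-1|$; a lower bound on $|z_1-1|$ is required, and the paper never proves one. Locating that soft spot is to your credit. But your proposed repair fails in CASE TWO, and not for reasons of constant--tracking: the inequality you reduce everything to, $\delta(z)^{k_5}\lesssim|z_1-1|$, is false. The straight normal approach points $z=(1-t,0)$ lie in $\mathscr{A}_1(1,0)$ for every aperture (both defining conditions are trivially satisfied there, since $\pi(z)=(1,0)$ and $R\big((1,0),(1,0)\big)=\rho(1,0)=0$) and they satisfy \eqref{bigregionvar2}; yet for them $|z_1-1|=t\sim\delta(z)\ll\delta(z)^{k_5}$ when $k_5<1$. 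So the ``only genuinely delicate point'' of your plan is unprovable. The defect is the uniformization $\eta=\delta^{k_5}$ itself: weakening the ball condition of \eqref{bigregion} from $|z-(1,0)|<\delta^{1/m}$ to $|z-(1,0)|<(\delta^{k_5})^{1/m}$ (a weakening the paper also makes in \eqref{bigregionvar2}) enlarges the region past the surface in question. The only viable route keeps the unweakened bound $|z-(1,0)|^m<\delta(z)$, valid in both cases by \eqref{bigregion}, so that the single lower bound needed --- in CASE ONE and CASE TWO alike --- is $\delta(z)\lesssim|z_1-1|$.

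Even that bound does not follow from your CASE ONE reasoning. The comparability $\delta(z)\sim\Re(1-z_1)$ is not a consequence of $\partial\rho/\partial z_1\neq0$; only the one-sided estimate $\delta(z)\lesssim 1-\Re z_1$ is needed, and even it can fail for pseudoconvex finite type hypersurfaces if one uses nothing but membership in $\Omega$: for $\rho=\Re(z_1-1)+\Re(c\,z_2^2)+|z_2|^{2m}$ (pseudoconvex, finite type at $(1,0)$) the boundary bulges past $\Re z_1=1$ in some tangential directions, and $\Omega$ contains points with $z_1=1$, $z_2\neq0$ approaching $(1,0)$, for which $|z_1-1|=0<\delta(z)$. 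Likewise ``$\Re(w_1-1)\sim|w_2|^m$'' is a model--domain identity, not a consequence of finite type: pluriharmonic terms are invisible to the Levi form and to the contact order, and they are exactly what can produce the cancellation you need to exclude. What rules such configurations out of $\mathscr{A}_1(1,0)$ is the second defining inequality, $\big|R\big(\pi(z),(1,0)\big)\big|<\Lambda^{\alpha D(z)}(1,0)$, because the polarization $R(\cdot,(1,0))$ carries precisely those pluriharmonic terms; in the example above it forces $|w_2|^2\lesssim\delta(z)$ and so expels the offending points. Your sketch invokes this condition only in passing (``confines $\Im z_1$'') and never quantitatively, so the gap remains. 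To be fair, the paper passes from \eqref{bigregionvar1}--\eqref{bigregionvar2} to Proposition~\ref{regionshape} with the very same step unproved; you have identified a genuine weakness of the paper, but your repair does not close it.
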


Then following the Krantz' arguments in \cite{SK2}, we can get some similar results, to be discussed in the following subsections.

\subsection{Bounded holomorphic functions}

First we define a two-dimensional and totally real region:
\begin{displaymath}
T=\{(s+i0, t+i0)\in \Omega: s, t \in \mathbb{R}, 0<s<1, 0<|t|<\sqrt[m]{1-s}\}.
\end{displaymath}

Then for $j=1, 2, \ldots$, we define
\begin{displaymath}
\Omega_j=\{(z_1, z_2) \in \Omega: 1-2^{-j} \leqslant \Re z_1 < 1-2^{-j-1}, |\Im z_1| < 2^{-j} \text{ and }|z_2|< \sqrt[m]{2^{-j}}\}.
\end{displaymath}

For each $\Omega_j$, the map 
\begin{displaymath}
\varphi_j(z_1, z_2)=(2^{j-j_0}(z_1-1)+1, \sqrt[m]{2^{j-j_0}}z_2)
\end{displaymath}
gives a biholomorphic mapping from $\Omega_j$ onto a region $\Omega_{j_0}$, where $j_0$ is a positive integer.

By Proposition \eqref{regionshape}, we have
\begin{equation}
\mathscr{A} \subseteq \bigcup_{j=1}^{\infty} \Omega_j =\bigcup_{j=1}^{\infty} \varphi_j^{-1}(\Omega_0) .
\end{equation}

\begin{proposition}
Let $f$ be a bounded holomorphic function in $\Omega$. If 
\begin{displaymath}
\lim_{T \ni z \to (1,0)} f(z) = 0,
\end{displaymath} then
\begin{displaymath}
\lim_{\mathscr{A} \ni z \to (1,0)} f(z) = 0.
\end{displaymath}
\end{proposition}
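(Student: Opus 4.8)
The plan is to run a rescaling and normal-families argument in the spirit of Krantz's proof for strongly pseudoconvex domains, using the maps $\varphi_j$ to collapse the whole admissible region into a single fixed model region. Fix a bound $|f| \leqslant M$ on $\Omega$ and set $f_j = f \circ \varphi_j^{-1}$, a holomorphic function on $\Omega_{j_0}$ with $|f_j| \leqslant M$; by Montel's theorem the family $\{f_j\}_j$ is normal on $\Omega_{j_0}$. Since $\mathscr{A} \subseteq \bigcup_j \Omega_j$ by Proposition \eqref{regionshape} and the shells $\Omega_j$ shrink to $(1,0)$, any sequence $z^{(k)} \in \mathscr{A}$ with $z^{(k)} \to (1,0)$ lies in shells $\Omega_{j_k}$ with $j_k \to \infty$, and $f(z^{(k)}) = f_{j_k}\big(\varphi_{j_k}(z^{(k)})\big)$ with $\varphi_{j_k}(z^{(k)}) \in \Omega_{j_0}$. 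Thus it suffices to understand the subsequential limits of the $f_j$ on the fixed region $\Omega_{j_0}$.

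The key observation is that $T$ is invariant under the rescaling: a direct computation gives $\varphi_j(T \cap \Omega_j) = T \cap \Omega_{j_0}$, because the $\sqrt[m]{\cdot}$ scaling in the second coordinate is matched exactly to the defining inequality $0 < |t| < \sqrt[m]{1-s}$ of $T$ and to the constraint $|z_2| < \sqrt[m]{2^{-j}}$ of $\Omega_j$. Since $T$ consists of points with real coordinates, $T \cap \Omega_{j_0}$ contains a nonempty open piece of the real plane $\mathbb{R}^2 \subseteq \mathbb{C}^2$. For each fixed $w \in T \cap \Omega_{j_0}$ we have $\varphi_j^{-1}(w) \in T \cap \Omega_j$ and $\varphi_j^{-1}(w) \to (1,0)$, so the hypothesis $\lim_{T \ni z \to (1,0)} f(z) = 0$ yields $f_j(w) = f(\varphi_j^{-1}(w)) \to 0$. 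Consequently every locally uniform subsequential limit $g$ of the $f_j$ vanishes on this open subset of $\mathbb{R}^2$. Because $\mathbb{R}^2$ is maximally totally real in $\mathbb{C}^2$ --- the restriction of a holomorphic $g$ to the real slice is real-analytic, and its Taylor coefficients there determine $g$ --- the identity theorem forces $g \equiv 0$ on the connected model region.

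Since every subsequential limit of the normal family is $0$, the whole sequence $f_j \to 0$ uniformly on compact subsets of the model region. Applying this to a fixed compact set containing all the rescaled admissible points $\varphi_{j_k}(z^{(k)}) \in \Omega_{j_0}$ gives $f(z^{(k)}) = f_{j_k}\big(\varphi_{j_k}(z^{(k)})\big) \to 0$. As the admissible sequence was arbitrary, this proves $\lim_{\mathscr{A} \ni z \to (1,0)} f(z) = 0$.

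I expect the main obstacle to be the compactness bookkeeping at the final step: the shells $\Omega_j$ are half-open (for instance along the face $\Re z_1 = 1 - 2^{-j}$ and along $|z_2| = \sqrt[m]{2^{-j}}$), so a rescaled point $\varphi_{j_k}(z^{(k)})$ may land on $\partial\Omega_{j_0}$, where locally uniform convergence of $f_j$ to $0$ is not guaranteed. I would resolve this by slightly enlarging the model region --- equivalently, by working with the overlapping union $\Omega_{j_0-1} \cup \Omega_{j_0} \cup \Omega_{j_0+1}$ dilated by a fixed factor, which for $j$ large still lies in $\Omega$ and into which each $\varphi_j$ carries $\Omega_j$ with room to spare --- so that $\overline{\Omega_{j_0}}$ becomes a compact subset of its interior while $T$ still meets it in an open piece of $\mathbb{R}^2$. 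One must also verify that this enlarged region is connected, so that the vanishing $g \equiv 0$ propagates from $T$ to the rescaled admissible points. The finite-type exponent $m$ enters only through the $\sqrt[m]{\cdot}$ scaling, which is precisely what makes $\varphi_j$ simultaneously respect the shells $\Omega_j$, the region $T$, and --- via Proposition \eqref{regionshape} --- the admissible region $\mathscr{A}$.
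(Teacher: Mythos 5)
Your proposal is correct and follows essentially the same route as the paper: rescale by $\varphi_j$, use uniform boundedness to extract a normal family on the model region $\Omega_{j_0}$, show every subsequential limit vanishes on the totally real slice $T \cap \Omega_{j_0}$ (hence vanishes identically), and conclude uniform convergence to $0$ on a compact set capturing the rescaled admissible points. Your extra care about the rescaled points possibly landing on $\partial\Omega_{j_0}$ is a legitimate refinement of a detail the paper glosses over with its assumption that a suitable compact $K$ with $\mathscr{A} \subseteq \bigcup_j \varphi_j^{-1}(K)$ exists, but it does not change the argument's structure.
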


\begin{proof}
First we see that $\varphi_j$ maps $T \cap \Omega_j$ onto $T \cap \Omega_{j_0}$.

For each $j$, construct
\begin{displaymath}
g_j=f \circ \varphi_j^{-1}: \Omega_{j_0} \to \mathbb{C}.
\end{displaymath}
They are uniformly bounded, so they form a normal family, and therefore we can find $g_0$, a subsequential limit function.

Note that $g_0$ vanishes on $T \cap \Omega_{j_0}$, a totally real two-dimensional region. It follows that $g_0$ vanishes identically.

For any compact set $K \subseteq \Omega_{j_0}$ such that 
\begin{displaymath}
\mathscr{A} \subseteq \bigcup_{j=1}^{\infty}\varphi_j^{-1}(K),
\end{displaymath}
we know that $g_j \to 0$ uniformly on $K$. Therefore $f$ has $\mathscr{A}$-admissible limit 0. 
\end{proof}
\begin{remark}
As seen in the analysis above, the crucial part is $\mathscr{A} \subseteq \bigcup_{j=1}^{\infty} \Omega_j =\bigcup_{j=1}^{\infty} \varphi_j^{-1}(\Omega_0)$. If we have more information about types of points near $(1,0)$, we may make $T$ sharper. 
\end{remark}

Now define a two-dimensional, totally real manifold
\begin{displaymath}
\mathscr{T}=\{\big(s+i\rho_1(s,t), t+i\rho_2(s,t)\big):(s, t) \in T\},
\end{displaymath}
where $\rho_i(s,t): T \to \mathbb{R}$ is a $C^2$ function, $i=1,2$.

Then we have
\begin{proposition}
Let $f$ be a bounded holomorphic function in $\Omega$. If 
\begin{displaymath}
\lim_{\mathscr{T} \ni z \to (1,0)} f(z) = 0,
\end{displaymath}then
\begin{displaymath}
\lim_{\mathscr{A} \ni z \to (1,0)} f(z) = 0.
\end{displaymath}
\end{proposition}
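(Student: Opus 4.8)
The plan is to reuse the dilation-and-normal-family scheme of the flat case essentially verbatim, the only genuinely new task being to control the curvature introduced by $\rho_1$ and $\rho_2$. As before I would set $g_j=f\circ\varphi_j^{-1}\colon\Omega_{j_0}\to\CC$; since $f$ is bounded the family $\{g_j\}$ is uniformly bounded, hence normal, and after passing to a subsequence I may assume $g_j\to g_0$ locally uniformly on $\Omega_{j_0}$ with $g_0$ holomorphic. Everything then reduces to showing that $g_0$ vanishes on a two-dimensional totally real subset of $\Omega_{j_0}$, for then the identity principle for maximally totally real submanifolds forces $g_0\equiv 0$, and the usual subsequence argument upgrades this to $g_j\to 0$ on every compact $K$ with $\mathscr{A}\subseteq\bigcup_j\varphi_j^{-1}(K)$, which is exactly the asserted admissible limit.

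The core computation is to follow $\mathscr{T}\cap\Omega_j$ through $\varphi_j$. Writing $\epsilon=2^{-(j-j_0)}$ and using the limiting coordinates $(\sigma,\tau)$ on $\Omega_{j_0}$, a point of $\mathscr{T}\cap\Omega_j$ has $s=1+\epsilon(\sigma-1)$ and $t=\epsilon^{1/m}\tau$, so its image under $\varphi_j$ has real parts $(\sigma,\tau)$ together with imaginary parts
\begin{displaymath}
\epsilon^{-1}\rho_1\!\big(1+\epsilon(\sigma-1),\,\epsilon^{1/m}\tau\big)
\quad\text{and}\quad
\epsilon^{-1/m}\rho_2\!\big(1+\epsilon(\sigma-1),\,\epsilon^{1/m}\tau\big).
\end{displaymath}
Here I would Taylor-expand the $C^2$ functions $\rho_1,\rho_2$ about $(1,0)$, where they vanish. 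Once I know that these rescaled imaginary parts converge as $\epsilon\to 0$ to some $C^1$ limits $\psi_1(\sigma,\tau),\psi_2(\sigma,\tau)$, the images $\varphi_j(\mathscr{T}\cap\Omega_j)$ converge to the set $S=\{(\sigma+i\psi_1,\tau+i\psi_2)\}$, which is the graph of an $\RR^2$-valued function over a piece of the real $(\sigma,\tau)$-plane and is therefore automatically totally real of real dimension two; passing to the limit then gives $g_0\equiv 0$ on $S$, as required. I emphasize that the limit manifold need not be flat: total reality survives any $C^1$ graph over the real plane, so there is no need to force the perturbations to disappear, only to make them converge.

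The main obstacle is precisely this convergence, because the scaling is \emph{anisotropic}: the $z_1$-direction is dilated by $\epsilon^{-1}$ while the $z_2$-direction is dilated only by $\epsilon^{-1/m}$, and the large factor $\epsilon^{-1}$ falls on $\rho_1$, whose smallness comes only from its Taylor expansion in $(s,t)$. Writing $\rho_1=p\,(s-1)+q\,t+O(|s-1|^2+t^2)$, the term $q\,t$ contributes $q\,\epsilon^{1/m-1}\tau$ to the first rescaled imaginary part, which blows up unless $\partial_t\rho_1(1,0)=0$; for $m>2$ the quadratic remainder imposes further vanishing of the $t$-derivatives of $\rho_1$ at $(1,0)$. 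I therefore expect the real work to be in pinning down the exact tangency conditions on $\rho_1$ (and, less stringently, on $\rho_2$) under which the rescaled perturbations stay bounded and converge, and in checking that these conditions are compatible with $\mathscr{T}$ being a legitimate totally real approach manifold tangent to $T$ at $(1,0)$. Once the limit manifold $S$ is secured, the remainder of the argument is identical to the flat case.
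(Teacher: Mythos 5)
Your proposal follows the paper's own route step for step: rescale by $\varphi_j$, set $g_j=f\circ\varphi_j^{-1}$, extract a normal limit $g_0$, show that $g_0$ vanishes on the limit of the rescaled copies of $\mathscr{T}$, and conclude $g_0\equiv 0$ from the identity principle for totally real two-manifolds, which yields the $\mathscr{A}$-limit via $\mathscr{A}\subseteq\bigcup_j\varphi_j^{-1}(\Omega_{j_0})$. The one step you do not carry out --- showing that the rescaled imaginary parts $\epsilon^{-1}\rho_1\big(1+\epsilon(\sigma-1),\epsilon^{1/m}\tau\big)$ and $\epsilon^{-1/m}\rho_2\big(1+\epsilon(\sigma-1),\epsilon^{1/m}\tau\big)$ stay bounded and converge --- is a genuine gap: without it there is no limit manifold $S$ at all, and your own expansion shows that for a general $C^2$ perturbation (say $\partial_t\rho_1(1,0)=q\neq 0$) the term $q\,\epsilon^{1/m-1}\tau$ blows up, so no subsequence extraction can rescue the argument. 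Note also that the implicit requirement $\mathscr{T}\subseteq\Omega$ does not supply the missing tangency: since $\Im z_1$ enters the defining function only quadratically, membership in $\Omega$ constrains $|\rho_1|$ only at the scale $(1-s)^{1/2}$, which rescales to $\epsilon^{-1/2}$, still unbounded.

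That said, you should know that the paper does not close this gap either; it closes it by fiat. The paper writes $\varphi_j(\mathscr{T}\cap\Omega_j)=\tau_j(T\cap\Omega_{j_0})$, asserts in one sentence that each graph map $\tau_j$ has bounded derivatives, and applies Arzel\`a--Ascoli to get $\tau_{j_k}\to\tau_0$ uniformly on compacta; the limit graph $\mathscr{T}_0$ is totally real, and the rest of its proof is exactly your final step (evaluate $g_{j_k}$ at points coming from $\mathscr{T}$ to see $g_0=0$ on $\mathscr{T}_0$, hence $g_0\equiv 0$). But the asserted uniform $C^1$ bound on the $\tau_j$ is precisely the boundedness of your rescaled perturbations, and your computation shows it fails for generic $C^2$ data: one needs $\partial_t\rho_1(1,0)=0$, and for $m>2$ in fact anisotropic smallness of the type $\rho_1(s,t)=O\big(|s-1|+|t|^m\big)$ with matching derivative bounds, which no pointwise $C^2$ hypothesis delivers. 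So the obstacle you flagged is not an artifact of your route; it is a hypothesis on $\mathscr{T}$ that the proposition as stated is missing, or an estimate the paper still owes. A complete proof must either build these tangency conditions on $\rho_1,\rho_2$ into the definition of $\mathscr{T}$ and then verify the convergence you outline, or find an argument that avoids passing to a limit of the rescaled manifolds.
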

\begin{proof}
Suppose 
\begin{equation}
\varphi_j(\mathscr{T} \cap \Omega_j)= \tau_j(T \cap \Omega_{j_0}),
\label{relation}
\end{equation}
then each $\tau_j$ has bounded derivatives. So we can find a subsequence $\{\tau_{j_k}\}$ that converges uniformly on compacta to $\tau_0$. By the relation \eqref{relation} and the definition of $g_j$, we know there exists a convergent subsequence $\{g_{j_k}\}$ with the limit $g_0$. 

Let $\mathscr{T}_0$ be the graph of $\tau_0$ over $T \cap \Omega_{j_0}$. We observe that $\mathscr{T}_0$ is a totally real, two-dimensional manifold.

We claim that $g_0$ vanishes on $\mathscr{T}_0$. It is true because for any $w \in \mathscr{T}_0$, there exists a point $z_{j_k} \in \mathscr{T} \cap \Omega_{j_k}$ such that $\varphi_{j_k}^{-1}(w)=z_{j_k}$. We then know that $\{z_{j_k}\}$ lies in $\mathscr{T}$ and approaches to $(1,0)$ as $k \to \infty$. By the hypothesis we know that 
\begin{equation}
\lim_{k \to \infty}f\circ \varphi_{j_k}^{-1}(w)=0,
\end{equation}
and therefore the claim is verified.

So we can again conclude that $g_0 \equiv 0$ and it then follows that $f$ has $\mathscr{A}$-limit 0 at $(1,0)$.
\end{proof}

\subsection{Normal functions}
First recall that holomorphic function $f: \Omega \to \hat{\mathbb{C}}$ is normal if the derivative $\nabla f$ is bounded from the Kobayashi metric on $\Omega$ to the spherical metric on $\hat{\mathbb{C}}$. This is a generalization of the normal functions of a single complex variable. For more details, pleace refer to \cite{CK}.
\begin{theorem}
\label{main1}
Let $f$ be a normal holomorphic function in $\Omega$. If 
\begin{displaymath}
\lim_{\mathscr{T} \ni z \to (1,0)} f(z) = 0,
\end{displaymath} then
\begin{displaymath}
\lim_{\mathscr{A} \ni z \to (1,0)} f(z) = 0.
\end{displaymath}
\end{theorem}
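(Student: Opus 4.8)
The plan is to mimic the proof of the preceding proposition, replacing the normal-family argument for uniformly bounded functions by the corresponding statement for holomorphic maps into the Riemann sphere $\hat{\mathbb{C}}$ whose spherical derivatives are controlled by the Kobayashi metric. As before, I set $g_j=f\circ\varphi_j^{-1}:\Omega_{j_0}\to\hat{\mathbb{C}}$ and look for a subsequential limit $g_0$ that vanishes on the limiting totally real manifold $\mathscr{T}_0$, which will force $g_0\equiv 0$ and hence give the $\mathscr{A}$-limit $0$.

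First I would verify that $\{g_j\}$ is a normal family on $\Omega_{j_0}$ with a bound uniform in $j$. Write $F_D$ for the Kobayashi-Royden infinitesimal metric of a domain $D$ and $f^{\#}$ for the spherical derivative. Normality of $f$ means $f^{\#}(z;V)\leqslant C\,F_\Omega(z;V)$ for all $z\in\Omega$ and all tangent vectors $V$. Since the inclusion $\Omega_j\hookrightarrow\Omega$ is holomorphic, the distance-decreasing property of the Kobayashi metric gives $F_\Omega(z;V)\leqslant F_{\Omega_j}(z;V)$ for $z\in\Omega_j$; and since $\varphi_j:\Omega_j\to\Omega_{j_0}$ is biholomorphic, the metric is invariant: $F_{\Omega_j}(z;d\varphi_j^{-1}(u))=F_{\Omega_{j_0}}(\varphi_j(z);u)$. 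Combining these with the chain rule $g_j^{\#}(w;u)=f^{\#}(\varphi_j^{-1}(w);d\varphi_j^{-1}(u))$ yields $g_j^{\#}(w;u)\leqslant C\,F_{\Omega_{j_0}}(w;u)$, a bound independent of $j$. As $F_{\Omega_{j_0}}$ is bounded on each compact $K\subseteq\Omega_{j_0}$, the spherical derivatives $g_j^{\#}$ are uniformly bounded on $K$, so by Marty's criterion (in its several-variable form) the family $\{g_j\}$ is relatively compact in $C(\Omega_{j_0},\hat{\mathbb{C}})$ for uniform convergence on compacta in the spherical metric. Passing to the subsequence already selected for $\{\tau_j\}$, I extract a limit $g_0$.

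Next I would identify $g_0$. The uniform-on-compacta limit of holomorphic maps into $\hat{\mathbb{C}}$ is again such a map, and by the usual dichotomy it is either a genuine $\mathbb{C}$-valued (meromorphic) function or the constant $\infty$. Exactly as in the bounded case, the hypothesis $\lim_{\mathscr{T}\ni z\to(1,0)}f(z)=0$ together with relation \eqref{relation} shows that $g_0$ vanishes on the totally real two-dimensional manifold $\mathscr{T}_0$. In particular $g_0\not\equiv\infty$, so near $\mathscr{T}_0$ the map $g_0$ takes finite values and is holomorphic there; and a holomorphic function vanishing on a totally real manifold of maximal dimension must vanish on a neighborhood and hence, by the identity theorem, on all of $\Omega_{j_0}$. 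Thus $g_0\equiv 0$.

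Finally, since $g_0\equiv 0$ and the convergence is uniform on compacta, for any compact $K\subseteq\Omega_{j_0}$ with $\mathscr{A}\subseteq\bigcup_{j}\varphi_j^{-1}(K)$ we obtain $g_j\to 0$ uniformly on $K$, which is precisely the statement that $f$ has $\mathscr{A}$-limit $0$ at $(1,0)$. I expect the main obstacle to be the uniform normality estimate of the second paragraph: one must ensure that the constant controlling the spherical derivatives of the rescaled functions $g_j$ does not degenerate as $j\to\infty$, which is exactly why the two comparisons of Kobayashi metrics — monotonicity under the inclusion $\Omega_j\hookrightarrow\Omega$ and invariance under the biholomorphism $\varphi_j$ — must be chained together rather than appealing to the intrinsic metric of $\Omega$ alone.
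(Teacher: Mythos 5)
Your proposal is correct, and its skeleton is exactly the paper's: rescale by $\varphi_j$, prove the rescaled family is normal by chaining (normality of $f$ with respect to the Kobayashi metric of $\Omega$) with (monotonicity of the metric under the inclusion $\Omega_j \hookrightarrow \Omega$) and (invariance under the biholomorphism $\varphi_j$), extract a limit $g_0$ along the subsequence already chosen for $\{\tau_j\}$, show $g_0$ vanishes on the totally real manifold $\mathscr{T}_0$, and conclude $g_0 \equiv 0$, hence the $\mathscr{A}$-limit. Where you differ is in how normality of $\{g_j\}$ is established. The paper does not invoke a several-variable Marty criterion; it slices instead: it takes a Kobayashi extremal disc $\psi : D \to \Omega_{j_0}$ at a point $p$, sets $\mu_j = g_j \circ \psi$, bounds $|\mu_j'(0)|$ by the product of the normality bound and the reciprocal Kobayashi metric of $\Omega_j$ (controlled by the same monotonicity), spreads this bound using M\"{o}bius transformations, and extracts a normally convergent subsequence of $\{\mu_{j_k}\}$ on the disc, from which convergence of $\{g_{j_k}\}$ on $\Omega_{j_0}$ is asserted. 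Your route is more direct and tighter in two respects: (i) passing from convergence of $g_{j_k}\circ\psi$ along extremal discs to convergence of $g_{j_k}$ on $\Omega_{j_0}$ needs an additional argument (varying $p$ and $\psi$, equicontinuity, a diagonal extraction) that the paper leaves implicit, whereas Marty's criterion plus Arzel\`{a}--Ascoli gives relative compactness of $\{g_j\}$ in $C(\Omega_{j_0},\hat{\mathbb{C}})$ in one stroke; (ii) you explicitly dispose of the possibility $g_0 \equiv \infty$ (ruled out since $g_0$ vanishes on $\mathscr{T}_0$), a point the paper passes over silently. Both arguments consume the same hypotheses and the same geometric input from Proposition \ref{regionshape}; yours trades the extremal-disc machinery for the standard several-variable Marty statement, which is arguably the cleaner bookkeeping.
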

\begin{proof}
Let $D$ be the unit disc in $\mathbb{C}$.
Consider a holomorphic mapping $\psi : D \to \Omega_{j_0}$ with $\psi(0)=p \in \Omega_{j_0}$.  We may take it to be an extremal function for the Kobayashi metric at the point $p$.

Define a function $\mu_j : D \to \hat{\mathbb{C}}$:
\begin{displaymath}
\mu_j=f\circ \varphi_j^{-1} \circ \psi.
\end{displaymath}

It then follows
\begin{equation}
|\mu_j '(0)| \leqslant \big|\nabla f(\varphi_j^{-1}(p))\big|\big|\big(\varphi_j^{-1} \circ \psi\big)'(0)\big|.
\label{inequality}
\end{equation}

We notice that $\big|\nabla f(\varphi_j^{-1}(p))\big|$ is bounded from the Kobayashi metric on $\Omega$, and $\big|\big(\varphi_j^{-1} \circ \psi\big)'(0)\big|$ is the reciprocal of the Kobayashi metric for $\Omega_j$ at $\varphi_j^{-1}(p)$. We also notice that the Kobayashi metric on $\Omega$ is smaller than that on $\Omega_j$. Therefore we can see that $|\mu_j '(0)|$ is bounded on compact subset of $D$, and this bound is independent of $j$, and the choice of $p$ in a compact subset $K \subseteq D$. By composing a M\"{o}bius transformation we can have a similar estimate for $\mu_j'$ at any point of a compact subset of $D$.

Therefore we can find  a normally convergent subsequence $\{\mu_{j_k}'\}$ of $\mu_j '$ with the limit function $\mu_0'$.  Consequently, $\{\mu_{j_k}=g_{j_k}\circ\psi\}$ is also convergent, and so is $\{g_{j_k}\}$, with the limit function $g_0$. 

As shown in the proof of the previous theorem,  we can obtain a totally real, two-dimensional manifold $\mathscr{T}_0$, the graph of $\tau_0$, a subsequential limit of $\{\tau_j\}$. We can further deduce that $g_0$ vanishes on $\mathscr{T}_0$, then get that $g_0 \equiv 0$ and finally conclude that $f$ has $\mathscr{A}$-limit 0 at $(1,0)$.
\end{proof}

\section{Boundary approach}
Suppose $f$ is a bounded holomorphic function on $\Omega$ such that $|f(z)| \leq 1$ for any $z \in \Omega$. For $P \in \partial \Omega$,the boundary value $|f(P)|$ is defined to be $\limsup_{\Omega \ni z \to P}|f(z)| \in \mathbb{R}\cup\{\infty\}$. 

There are some interesting discoveries in \cite{LV} and \cite{CK} concerning the boundary curves. The result in \cite{LV} of Lehto and Virtanen states that if $\lim_{t \to 1-}|f(\gamma(t))|=0$ where $f$ is a normal function in $D \subseteq \CC$ and $\gamma : [0,1] \to \overline{D}$ is such a curve that $\gamma(1)=P \in \partial D$, then $f$ has non-tangential limit 0 at $P$. For several complex variables, Cima and Krantz gives a similar result for hypoadmissible convergence in \cite{CK}, which adopts complex normal curves. Recall that $\gamma : [0,1] \to \partial \Omega$ is complex normal if $\langle\gamma'(t),\nu_{\gamma(t)}\rangle\neq0$, all $0 \leqslant t \leqslant 1$. Then their theorem states:
\begin{theorem}
Let $\Omega \subset \subset \CC^n$ be a domain with $C^2$ boundary. Let $\gamma:[0,1] \to \partial \Omega$ be a $C^2$ curve which is complex normal. Let $f: \Omega \to \CC$ be a normal and assume $f \in H^p(\Omega)$, $p > 4n$. Suppose that
\begin{displaymath}
\lim_{t \to 1-}|f(\gamma(t))|=0.
\end{displaymath} 
Then $f$ has hypoadmissible limit 0 at $P \equiv \gamma(1)$.
\end{theorem}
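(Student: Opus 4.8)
The plan is to transplant the Lehto--Virtanen blow-up argument to the several-variables setting, replacing the hyperbolic equicontinuity of the one-variable theory by the normality of $f$, and to exploit the complex-normal hypothesis both to lift the boundary curve $\gamma$ to an interior curve and to force the renormalized hypoadmissible points to collapse onto that curve. First I would localize near $P=\gamma(1)$ and choose holomorphic coordinates adapted to $\partial\Omega$ at $P$, so that the inner complex-normal direction becomes a coordinate axis. The condition $\langle\gamma'(t),\nu_{\gamma(t)}\rangle\neq0$ guarantees that, for $t$ near $1$, $\gamma$ retains a genuine normal component, and this transversality lets me push the curve slightly inward, setting $\tilde\gamma(t)=\gamma(t)-\epsilon(t)\nu_{\gamma(t)}\in\Omega$. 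Using that $f\in H^p(\Omega)$ possesses normal, and indeed admissible, boundary limits almost everywhere together with the hypothesis $\lim_{t\to1^-}|f(\gamma(t))|=0$, I would verify that $|f(\tilde\gamma(t))|\to0$ as $t\to1$; the integrability threshold $p>4n$ is what I would use to control the relevant maximal function so that the inward lift inherits the boundary vanishing along $\tilde\gamma$.

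The engine of the argument is the Lipschitz estimate furnished by normality: since $\nabla f$ is bounded from the Kobayashi metric on $\Omega$ to the spherical metric on $\hat{\CC}$, integrating along Kobayashi geodesics yields a constant $C$ with $d_S\big(f(z),f(w)\big)\le C\,d_K(z,w)$ for all $z,w\in\Omega$, where $d_K$ and $d_S$ denote the Kobayashi and spherical distances. Normality is moreover preserved under biholomorphisms, which allows the same dilation technique as in Section 2. Concretely, I would introduce anisotropic dilations $\delta_\lambda$ carrying a fixed model domain onto shrinking neighborhoods of $P$ (the continuous analogue of the maps $\varphi_j^{-1}$) under which $\Omega$ near $P$ converges to a model domain and the renormalized functions $f_\lambda=f\circ\delta_\lambda$ form a normal family; extracting a locally uniform subsequential limit $g_0$ (possibly the constant $\infty$, which the ensuing vanishing will exclude) is then routine.

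It remains to pin down where $g_0$ must vanish and where the hypoadmissible sequence lands. Under the dilations $\delta_\lambda$ the interior curve $\tilde\gamma$ converges to a model curve $\Gamma_0$, and the vanishing $|f(\tilde\gamma)|\to0$ transfers to $g_0\equiv0$ on $\Gamma_0$. The decisive point is that hypoadmissible approach forces the tangential displacement of an approaching sequence to decay faster than the normal displacement, so that after renormalization a hypoadmissible sequence $\{z^{(j)}\}$ tending to $P$ does not merely stay within bounded Kobayashi distance of $\Gamma_0$ but actually clusters onto $\Gamma_0$ itself. Evaluating $g_0$ at these limit points, which lie on $\Gamma_0$, gives $0$, and then the Lipschitz estimate together with the local uniform convergence $f_\lambda\to g_0$ upgrades this to $\lim_{j}f(z^{(j)})=0$, i.e.\ the hypoadmissible limit $0$ at $P$.

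The step I expect to be the main obstacle is exactly this collapse onto $\Gamma_0$. In the maximal totally real situation of the earlier propositions one could invoke the identity principle to force $g_0\equiv0$, but here the limiting set $\Gamma_0$ is only a real curve and hence not a uniqueness set for holomorphic functions; one therefore cannot conclude $g_0\equiv0$ and must instead show that the renormalized hypoadmissible points genuinely accumulate on $\Gamma_0$. This is precisely the feature that distinguishes hypoadmissible from admissible approach: for admissible sequences the tangential part does not decay fast enough, the renormalized points limit off $\Gamma_0$ where $g_0$ need not vanish, and indeed the admissible Lindel\"of principle fails (cf.\ the counterexample in \cite{CK}). Making the hypoadmissible collapse quantitative---matching the anisotropic decay rates of the hypoadmissible region against the scaling exponents of $\delta_\lambda$ and against the normal and tangential rates of $\tilde\gamma$---is the technical crux; a secondary difficulty is the boundary-to-interior passage, where the exponent $p>4n$ must be deployed with care.
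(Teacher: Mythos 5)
First, a point of order: the paper you are working from does not prove this statement at all --- it is quoted from Cima--Krantz \cite{CK} as background motivation for Section 3, so there is no proof in the paper to compare yours against, and your proposal must stand on its own. It does not: the step you yourself flag as the crux, namely that after renormalization a hypoadmissible sequence ``clusters onto $\Gamma_0$ itself,'' is not merely unproven but false, and the repair you propose points in the wrong direction. Under parabolic dilations adapted to a $C^2$ boundary (normal directions scaled by $\lambda$, complex-tangential ones by $\sqrt{\lambda}$), hypoadmissibility only forces the \emph{tangential} part of the rescaled points to tend to $0$; their normal parts still sweep out an open cone inside the complex normal slice. So the rescaled hypoadmissible points cluster onto a two-real-dimensional open subset of the slice $\{(\zeta,0)\}$, not onto the one-real-dimensional curve $\Gamma_0$, and no amount of quantitative bookkeeping of decay rates can change this.

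The way out --- and the real reason the complex-normal hypothesis is present --- is that $\Gamma_0$ is itself contained in that same slice: since $\langle\gamma'(t),\nu_{\gamma(t)}\rangle\neq 0$, the curve moves at linear rate in the $i\nu$ direction while its tangential component rescales away like $\sqrt{\lambda}$, so the rescaled (inward-pushed) curve converges to a \emph{nondegenerate} continuum $\Gamma_0$ lying in the one-complex-dimensional normal slice. Restricting the limit function $g_0$ to that slice puts you in one variable, where a nondegenerate continuum \emph{is} a uniqueness set: $g_0$ vanishes identically on the slice, hence at every cluster point of the rescaled hypoadmissible sequence, and the argument closes. (If the curve were complex tangential, $\Gamma_0$ would degenerate and this fails, consistent with known counterexamples.) Separately, your boundary-to-interior passage is also gapped: a.e.\ admissible boundary values of an $H^p$ function say nothing along a measure-zero curve, and the hypothesis, read with the paper's convention $|f(P)|=\limsup_{\Omega\ni z\to P}|f(z)|$, only gives smallness of $f$ on balls of uncontrolled radius $r(t)$ around $\gamma(t)$; to feed the rescaling argument you need smallness at depth comparable to $|1-t|$, and propagating it inward is exactly where the growth estimate $|f(z)|\lesssim\delta(z)^{-n/p}$ with $p>4n$, combined with a two-constants type argument, has to do real work rather than the maximal-function appeal you sketch.
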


However, there is no way to get an admissible limit. Let us consider this domain 
\begin{displaymath}
\Omega_2=\{(z_1,z_2) \in \mathbb{C}^2: |z_1|^2+|z_2|^4<1\},
\end{displaymath}
which is not strongly pseudoconvex but still of finite type.

Consider the bounded holomorphic function
\begin{displaymath}
f(z)=f(z_1, z_2)=\frac{z_2^4}{1-z_1}.
\end{displaymath}

We notice that $f$ has a radial limit 0. If there exists a complex normal curve terminating at $(1,0)$, along which $f$ has a limit $\lambda \neq 0$, then according to some other results in \cite{CK} by Cima and Krantz, $f$ should have a hypoadmissible limit $\lambda$ and thus radial limit $\lambda$, which gives a contradition. This means, along \emph{any} complex normal curve, if $f$ has a limit, then this limit must be 0. Or we can just check this curve $\varphi(t)=(e^{it},0), 0 \leq t \leq \frac{\pi}{2}$, and note that it is complex normal and along it $f$ has the limit 0.

However, this does not yield the admissible limit 0, as we can find a sequence of points $\{z^{(k)}\}_{k=0}^{\infty}$ with $z^{(k)}=(1-2^{-4k}, 2^{-k})$, which are in an admissible approach region, and
\begin{equation}
f(z^{(k)})=\frac{2^{-4k}}{2^{-4k}}=1.
\end{equation}
Therefore we have the limit
\begin{equation}
\lim_{k \to \infty}f(z^{(k)})=1.
\end{equation}

So we need to put stricker conditions.

Suppose 
\begin{displaymath}
\lim_{t \to 1^{-}}f(\varphi(t))=\ell
\end{displaymath}
for any boundary curve $\varphi: [0,1] \to \partial \Omega$  with $\varphi(1)=(1,0)$. We hope to find a Lindel\"{o}f principle for this case, that is, we wish that $f$ had the admissible limit $\ell$ at $(1,0)$.

Since the boundary values of $f$ are defined through nontangential limit and along all cuvers near $(1,0)$, $|f|$ is defined, we may assume that there exists a neighborhood $W \subset \partial \Omega$ of $(1,0)$ such that 
\begin{displaymath}
\lim_{W \ni \omega \to (1,0)}f(w) = \ell,
\end{displaymath}
and may even assume that $W$ is also part of the boundary of another domain $V$ inside $\Omega$ that is of $C^2$ boundary, and $f$ has the nontangential limit at every point in $W$.

As stated in the previous section, we may just consider the region $\mathscr{A}$ which is comparable with $\mathscr{A}_1(1,0)$. So we hope to get this result:
\begin{displaymath}
\lim_{\mathscr{A}\ni z \to (1,0)}|f(z)-f(1,0)|=0.
\end{displaymath}

So we begin to estimate $|f(z)-f(1,0)|$.

First we have the triangle inequality
\begin{align}
|f(z)-f(1,0)| &\leq |f(z)-f((1,0)-\frac{1}{k}\nu)|  \nonumber \\
              &+|f((1,0)-\frac{1}{k}\nu)-f(1,0)|  
\end{align}
for any $k \in \mathbb{N}$, where $\nu$ is the outward unit normal vector at $(1,0)$.

We have no worry about the second expression  because it has the limit 0 when $k \to \infty$, so we hope to have the limit 0 for the first expression. To see this, we use Poisson integral over $\partial V$. So it turns out to be the problem to estimate in terms of Poisson kernels. 

For any positive $\varepsilon$ small enough, define
\begin{displaymath}
W_{\varepsilon}=\{P \in W: |P-(1,0)|<\varepsilon^3\},
\end{displaymath}
then $\sigma(W_{\varepsilon}) \sim \varepsilon^9$.

\begin{align}
|f(z)-f((1,0)-\frac{1}{k}\nu)| &\leq\int_{\partial V}{|P(z, \zeta)-P((1,0)-\frac{1}{k}\nu, \zeta)||f(\zeta)|\,d\sigma(\zeta)} \nonumber\\
&=\int_{\partial V- W_{\varepsilon}}{|P(z, \zeta)-P((1,0)-\frac{1}{k}\nu, \zeta)||f(\zeta)|\,d\sigma(\zeta)} \cdots (*) \nonumber \\
&+\int_{W_{\varepsilon}}{|P(z, \zeta)-P((1,0)-\frac{1}{k}\nu, \zeta)||f(\zeta)|\,d\sigma(\zeta)} \cdots (**) 
\end{align}

We are not worried about (*) because on $\partial V- W_{\varepsilon}$, as $z$ is approaching to $(1, 0)$ and $k$ is tending to $\infty$, $\zeta$ is away from the singularities of the Poisson kernels, and $|f(\zeta)|$ is bounded. Therefore the expression (*) has the limit 0.

We know that $P(z, \zeta)$ is comparable to $\delta(z)/|z-\zeta|^4$, so we want to estimate, for $\zeta \in W_{\varepsilon}$,
\begin{equation}
|\frac{\delta(z)}{|z-\zeta|^4}-\frac{\delta((1,0)-\frac{1}{k}\nu)}{|(1,0)-\frac{1}{k}\nu-\zeta|^4}|.
\end{equation}

If $C_1 \varepsilon <\delta(z) < C_2 \varepsilon$ and $C_3 \varepsilon < \frac{1}{k} < C_4 \varepsilon$, we have 
\begin{equation}
|z-\zeta|^4 \geq (\delta(z))^4 > C_1^4 \varepsilon^4
\end{equation}
and 
\begin{equation}
|(1,0)-\frac{1}{k}\nu|^4 = (\frac{1}{k})^4 > C_3^4 \varepsilon^4.
\end{equation}

We also need their upper bounds. By the triangle inequalily we know that
\begin{equation}
|z-\zeta| \leq |z-(1,0)|+|\zeta-(1,0)|.
\end{equation}

Since $z \in \mathscr{A}$, as shown in \eqref{bigregion},there is the relation $|z-(1,0)|<\delta(z)^{\frac{1}{m}}$.

By the definition of $W_{\varepsilon}$, we have $|\zeta-(1,0)| < \varepsilon^3$. So we can estimate that
\begin{equation}
|z-\zeta| < C_6 \varepsilon^{\frac{1}{m}}.
\end{equation}

However, we estimate that
\begin{equation}
|(1,0)-\frac{1}{k}\nu-\zeta| \leq |(1,0)-\frac{1}{k}\nu -(1,0)| + |\zeta -(1,0)|< C_7 \varepsilon.
\end{equation}
Therefore we have 
\begin{equation}
\Big|\delta(z)|(1,0)-\frac{1}{k}\nu-\zeta|^4-\frac{1}{k}|z-\zeta|^4\Big| < C_8 \cdot \varepsilon \cdot (\varepsilon^{\frac{1}{m}})^4=C_8 \varepsilon^{1+\frac{4}{m}}.
\end{equation}

Now we can estimate that
\begin{align}
|\frac{\delta(z)}{|z-\zeta|^4}-\frac{\delta((1,0)-\frac{1}{k}\nu)}{|(1,0)-\frac{1}{k}\nu-\zeta|^4}|
&=
|\frac{\delta(z)}{|z-\zeta|^4}-\frac{\frac{1}{k}}{|(1,0)-\frac{1}{k}\nu-\zeta|^4}| \nonumber \\
&=\frac{\Big|\delta(z)|(1,0)-\frac{1}{k}\nu-\zeta|^4-\frac{1}{k}|z-\zeta|^4\Big|}{|z-\zeta|^4|(1,0)-\frac{1}{k}\nu-\zeta|^4} \nonumber \\
&\leq \frac{C_9}{\varepsilon^8}\big|\delta(z)|(1,0)-\frac{1}{k}\nu-\zeta|^4-\frac{1}{k}|z-\zeta|^4\big| \nonumber \\
&< \frac{C_9}{\varepsilon^8} \cdot C_8 \varepsilon^{1+\frac{4}{m}} \nonumber \\
&=\frac{C_{10}}{\varepsilon^{7-\frac{4}{m}}}. 
\end{align}

Therefore, by the boundedness of $f$ on the boundary, we see that
\begin{align}
\int_{W_{\varepsilon}}{|P(z, \zeta)-P((1,0)-\frac{1}{k}\nu, \zeta)||f(\zeta)|\,d\sigma(\zeta)}
&<C_{11}\varepsilon^9\cdot\frac{C_{10}}{\varepsilon^{7-\frac{4}{m}}} \nonumber \\
&=C_{12}\varepsilon^{2+\frac{4}{m}} \nonumber \\
&<C_{12}\varepsilon^2. 
\end{align}

This means $|f(z)-f((1,0)-\frac{1}{k}\nu)|$ has the limit 0 as $z$ approaches to $(1,0)$ admissibly and $k$ tends to infinity.

Therefore we can establish this result:
\begin{theorem}
\label{main2}
Let $\Omega$ be a sdomain in $\mathbb{C}^2$ that is of finite type.
Suppose $f$ is a bounded holomorphic function on $\Omega$ such that $|f(z)| \leq 1$ for any $z \in \Omega$, and 
\begin{displaymath}
\lim_{t \to 1^{-}}f(\varphi(t))=\ell
\end{displaymath}
for any boundary curve $\varphi: [0,1] \to \partial \Omega$  with $\varphi(1)=(1,0)$. Then
\begin{displaymath}
\lim_{\mathscr{A}\ni z \to (1,0)}|f(z)-f(1,0)|=0.
\end{displaymath}
\end{theorem}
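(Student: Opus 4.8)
The plan is to reduce the admissible limit to a Poisson-kernel estimate on an auxiliary $C^2$ domain, and then to win by balancing the surface measure of a shrinking boundary cap against the near-diagonal blow-up of the kernel. First I would exploit the hypothesis that $f(\varphi(t)) \to \ell$ along \emph{every} boundary curve terminating at $(1,0)$: this forces the boundary value map $\omega \mapsto f(\omega)$, defined by nontangential limits, to have limit $\ell$ at $(1,0)$ along $\partial\Omega$. Hence there is a boundary neighborhood $W \subset \partial\Omega$ of $(1,0)$ with $\lim_{W \ni \omega \to (1,0)} f(\omega) = \ell$, and I may arrange $W$ to sit on the boundary of a domain $V \subset \Omega$ with $C^2$ boundary on which $f$ admits nontangential boundary values. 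In particular the normal approach gives $f(1,0) = \ell$, which will let me treat $f(1,0)$ and the radial limit interchangeably.

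By Proposition \ref{regionshape} it suffices to establish the limit along the comparable region $\mathscr{A}$, whose decisive feature is the containment $|z-(1,0)| < \delta(z)^{1/m}$ recorded in \eqref{bigregion}. I would then insert the normal comparison point $(1,0) - \tfrac{1}{k}\nu$ and split by the triangle inequality
\begin{displaymath}
|f(z)-f(1,0)| \leq \big|f(z) - f((1,0)-\tfrac{1}{k}\nu)\big| + \big|f((1,0)-\tfrac{1}{k}\nu) - f(1,0)\big|.
\end{displaymath}
The second term tends to $0$ as $k \to \infty$ because $(1,0)-\tfrac1k\nu \to (1,0)$ radially and the normal limit equals $\ell = f(1,0)$. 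For the first term I would represent $f$ on $V$ through its Poisson integral over $\partial V$ and bound $|f(z) - f((1,0)-\tfrac1k\nu)|$ by the integral of $|P(z,\zeta) - P((1,0)-\tfrac1k\nu,\zeta)|\,|f(\zeta)|$, after which I split the domain of integration at the cap $W_\varepsilon = \{P \in W : |P-(1,0)| < \varepsilon^3\}$, for which $\sigma(W_\varepsilon) \sim \varepsilon^9$. On $\partial V - W_\varepsilon$ the kernel stays bounded as $z \to (1,0)$ and $k \to \infty$ while $|f|$ is bounded, so that contribution vanishes in the limit.

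The heart of the argument, and the step I expect to be the main obstacle, is the estimate over $W_\varepsilon$. The difficulty is that the Poisson kernel blows up near the diagonal, so the cap must be small enough in measure to absorb this singularity yet large enough to capture the approach; the whole argument lives or dies by the exact exponent this trade-off produces. I would couple the two rates of approach by imposing $C_1\varepsilon < \delta(z) < C_2\varepsilon$ and $C_3\varepsilon < \tfrac1k < C_4\varepsilon$, and use the comparability $P(z,\zeta) \sim \delta(z)/|z-\zeta|^4$. The admissibility bound $|z-(1,0)| < \delta(z)^{1/m}$ together with $|\zeta-(1,0)| < \varepsilon^3$ gives the crucial upper estimate $|z-\zeta| < C\varepsilon^{1/m}$; this is precisely the gain from finite type that makes the delicate numerator $\big|\delta(z)|(1,0)-\tfrac1k\nu-\zeta|^4 - \tfrac1k|z-\zeta|^4\big|$ of order $\varepsilon^{1+4/m}$. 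Dividing by a denominator of order $\varepsilon^8$ yields a kernel difference of order $\varepsilon^{-(7-4/m)}$, and multiplying by $\sigma(W_\varepsilon) \sim \varepsilon^9$ leaves $\varepsilon^{2+4/m}$, which is a positive power of $\varepsilon$ for every $m \geq 2$. Letting $\varepsilon \to 0$, and with it $z \to (1,0)$ admissibly and $k \to \infty$, drives both the $(*)$ and $(**)$ pieces to zero and completes the proof.
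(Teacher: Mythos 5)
Your proposal is correct and follows essentially the same route as the paper's own argument: the same reduction to a boundary neighborhood $W$ lying on an auxiliary $C^2$ domain $V$, the same triangle inequality through the normal point $(1,0)-\frac{1}{k}\nu$, the same Poisson-integral split at the cap $W_{\varepsilon}$ with $\sigma(W_{\varepsilon})\sim\varepsilon^9$, and the same coupling $\delta(z)\sim\frac{1}{k}\sim\varepsilon$ that yields the kernel-difference bound of order $\varepsilon^{-(7-\frac{4}{m})}$ and the final estimate $\varepsilon^{2+\frac{4}{m}}$. There are no substantive differences to report.
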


\bibliographystyle{amsplain}

\end{document}